\documentclass{amsart}
\usepackage{amssymb,amsmath,latexsym}
\usepackage{amsthm}
\usepackage{fontenc}
\usepackage{amssymb}

\numberwithin{equation}{section}

\newtheorem{theorem}{Theorem}[section]


\setlength{\parindent}{0in}
\begin{document}
\author{Alexander E. Patkowski}
\title{On properties of the generalized Davenport Expansion}

\maketitle
\begin{abstract} We study the continuity properties of a generalized Davenport Fourier expansion we recently discovered, by imposing conditions on the coefficients. We also put our expansion into perspective from the position of Appell sequences. \end{abstract}

\keywords{\it Keywords: \rm Davenport expansions; Riemann zeta function; Fourier series}

\subjclass{ \it 2010 Mathematics Subject Classification 11L20, 11M06.}

\section{Introduction} Recall the fractional part function is given by $\{x\}=x-[x],$ where $[x]$ denotes the integer part of $x.$ In a recent paper [13], we proved a Fourier expansion which generalizes Davenport's celebrated result [4, eq.(2)], \begin{equation}\frac{1}{\pi}\sum_{n=1}^{\infty}\frac{A(n)}{n}\sin(2\pi n x)=-\sum_{n=1}^{\infty}\frac{a(n)}{n}\left(\{nx\}-\frac{1}{2}\right).\end{equation} There have been many follow-up papers on properties of these Fourier series [1, 3, 6, 7, 9, 16]. Here $a(n)$ is an arithmetic function such that $L(s)=\sum_{n\ge1}a(n)n^{-s}$ is holomorphic for $\Re(s)>1.$ Furthermore, $A(n)=F_0(n),$ where  $F_k(n):=\sum_{d|n}d^{-k}a(\frac{n}{d}).$ Now [13, Theorem 1.1] states that if $N\ge1,$ then for positive real numbers $x,$
\begin{equation}\sum_{n=1}^{\infty}\frac{a(n)}{n}\left(\{nx\}^{N}+N!\sum_{k=0}^{N-1}\frac{(-1)^k\zeta(-k)}{(N-k)!k!}\right)=
\sum_{n=1}^{\infty}a_n\sin(2 \pi n x)+\sum_{n=1}^{\infty}b_n\cos(2 \pi n x),\end{equation} where
 \begin{equation}a_n=-\frac{N!}{\pi n}\sum_{k=0}^{N-1}\frac{(-1)^kF_k(n)}{(N-k)!}\cos\left(\frac{\pi}{2}k\right),\end{equation}
 and \begin{equation}b_n=\frac{N!}{\pi n}\sum_{k=0}^{N-1}\frac{(-1)^kF_k(n)}{(N-k)!}\sin\left(\frac{\pi}{2}k\right).\end{equation} Here we have corrected a minor misprint to include the factor $(-1)^k$ in the finite sum contained in the terms of the series on the left side of (1.2). The proof we offered in [13] utilized a new Mellin transform and adapted Segal's proof [16]. One interesting example is when $a(n)=n^{-r},$ for $r>0.$ In this case $F_k(n)=n^{-r}\sum_{d|n}d^{r-k}=n^{-r}\sigma_{r-k}(n),$ and the left side of (1.2) becomes a refined form of the series
 $$\sum_{n=1}^{\infty}\frac{1}{n^{r+1}}\left(\{nx\}-\frac{1}{2}\right),$$ which was considered by Hardy and Littlewood in [5, pg.519]. \par It is a well-known property that a function need not be continuous everywhere to possess a Fourier series representation (e.g. [8, pg.66]). In particular, the Fourier series of such functions converge to the average of the limits from the right and left at these points of discontinuity. Several notable papers have made connections between the coefficients of Fourier series and the class of the function it represents [2, 11]. The main purpose of this paper is to identify which class the function on the left side of (1.2) belongs to when conditions are imposed on the coefficients (1.3) and (1.4). We also provide a connection between (1.2) to Appell sequences in the section that follows.
 \section{On the class of the generalized Davenport expansion}
 We say $v(\delta)$ is a modulus of continuity associated with a function $f:I\rightarrow\mathbb{R},$ if it has the property that when $x,y\in I$ for a closed interval $I,$ then $$|f(x)-f(h)|\le v(|x-h|).$$ Moreover, if $\lVert f\rVert$ is the maximum norm of a function $f,$ then
 \begin{equation}C^{v}:=\{f:\lVert f(x+h)-f(x)\rVert=O(v(h)) \}. \end{equation}
 It is known that series of the Davenport type are discontinuous at integral points, and in some cases rational points. For example, [7, pg.289, Theorem 6] states that (1.1) is discontinuous only at integral points when $a(n)=\Lambda(n),$ the von Mangoldt function [17, pg.4]. The proof of which involves a closed expression for the left hand side of (1.1) valid for $x\in(0,1).$ In order to apply theorems from N$\acute{e}$meth [11], we need to note first that the assumption of continuity is applied in the proofs therein to ensure that a bound exists for $|f(x+h)-f(x)|.$ Therefore, given the observation on the continuity of (1.1) at irrational points, we need to create a special class of a functions
 $$K^{v}:=\{x,h\in\mathbb{R}\setminus\mathbb{Q}\mid f: \lVert f(x+h)-f(x)\rVert=O(v(h)) \}.$$
 Note that if $v(h)=h^{\alpha},$ for a positive number $\alpha,$ then $f$ is Lipchitz of order $\alpha$ on $\mathbb{R}\setminus\mathbb{Q}.$ When $\alpha=0,$ we cannot infer anything about the continuity class of $f,$ but it is perhaps just as interesting that we can then say this function is "almost periodic." Further, we have $K^{v}\subset C^{v}.$ We will utilize an altered form of a theorem due to N$\acute{e}$meth.

\begin{theorem} ([11, pg.86, Theorem 1]) Suppose that $c_n$ are non-negative sine or cosine coefficients of the function $f(x),$ then \begin{equation}\sum_{j=1}^{m}jc_j=O\left(mv\left(\frac{1}{m}\right)\right), \end{equation}
and
\begin{equation}\sum_{j=m}^{\infty}c_j=O\left(v\left(\frac{1}{m}\right)\right), \end{equation}
together ensure that $f(x)\in C^{v}.$
\end{theorem}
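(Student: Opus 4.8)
The plan is to prove the sufficiency asserted in the statement: the two coefficient estimates force $f\in C^{v}$. I would treat the sine case, $f(x)=\sum_{n\ge1}c_{n}\sin(2\pi nx)$, the cosine case being word for word the same. The first observation is that taking $m=1$ in $(2.3)$ gives $\sum_{n\ge1}c_{n}=O(v(1))<\infty$; since the $c_{n}$ are non-negative, the Fourier series of $f$ then converges absolutely and uniformly on $\mathbb{R}$, so that it represents the (necessarily continuous) function $f$ and may be handled term by term in everything that follows.

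Next I would fix $x\in\mathbb{R}$ and $0<h\le1$ and use the identity $\sin(2\pi n(x+h))-\sin(2\pi nx)=2\cos(2\pi nx+\pi nh)\sin(\pi nh)$ together with $|\sin\theta|\le\min(1,|\theta|)$ to obtain
$$|f(x+h)-f(x)|\le\sum_{n\ge1}c_{n}\,\bigl|2\sin(\pi nh)\bigr|\le\sum_{n\ge1}c_{n}\min(2,2\pi nh).$$
The decisive step is then to split this sum at the threshold $m:=\lceil 1/h\rceil$, bounding $\min(2,2\pi nh)\le2\pi nh$ when $n\le m$ and $\min(2,2\pi nh)\le2$ when $n>m$, which yields
$$|f(x+h)-f(x)|\le2\pi h\sum_{n=1}^{m}nc_{n}+2\sum_{n=m+1}^{\infty}c_{n}.$$

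It then remains to feed the two hypotheses into the two pieces. By $(2.2)$ the first sum is $O\bigl(m\,v(1/m)\bigr)$, so the first term is $O\bigl(hm\,v(1/m)\bigr)$; since $1/h\le m\le2/h$ for $0<h\le1$ one has $hm\le2$, and since $1/m\le h$ and (as is customary) the modulus of continuity $v$ is non-decreasing one has $v(1/m)\le v(h)$, so the first term is $O(v(h))$. For the second term, $\sum_{n=m+1}^{\infty}c_{n}\le\sum_{n=m}^{\infty}c_{n}=O(v(1/m))=O(v(h))$ by $(2.3)$ and monotonicity again. Adding, $|f(x+h)-f(x)|=O(v(h))$ with an implied constant independent of $x$, which is exactly $\lVert f(x+h)-f(x)\rVert=O(v(h))$, i.e.\ $f\in C^{v}$.

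The trigonometric identity and the two-range splitting are routine; the one place needing a little care is the comparison of $v(1/m)$ with $v(h)$, which above is settled by monotonicity of $v$ and the elementary bracket estimate $1/h\le\lceil1/h\rceil\le2/h$. I therefore expect the main obstacle to be bookkeeping rather than conceptual: reconciling the normalisation of the Fourier series (period, the factor $2\pi$) and of the modulus of continuity used in [11] with the one used here, so that the implied constants coincide with those of [11]. The structural argument is insensitive to those conventions.
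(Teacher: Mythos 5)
Your argument is correct and is essentially the same one the paper uses: the paper cites N\'emeth for this theorem and, in its proof of Theorem 2.2, reproduces exactly your splitting of $f(x+h)-f(x)$ at $j\approx 1/h$, with $|\sin(jh)|\le jh$ on the low range, the plain tail bound on the high range, and the two hypotheses (2.2)--(2.3) applied to the two pieces. Your extra remarks (absolute convergence from $m=1$, monotonicity of $v$ to pass from $v(1/m)$ to $v(h)$) are standard housekeeping that the paper leaves implicit.
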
 
Since in some cases our Fourier series coefficients will change sign infinitely often, we will need to state a related theorem which we were unable to locate in the literature.
\begin{theorem} Suppose that $c_n$ are sine or cosine coefficients of the function $f(x),$ then \begin{equation}\sum_{j=1}^{m}j|c_j|=O\left(mv\left(\frac{1}{m}\right)\right), \end{equation}
and
\begin{equation}\sum_{j=m}^{\infty}|c_j|=O\left(v\left(\frac{1}{m}\right)\right), \end{equation}
together ensure that $f(x)\in C^{v}.$
\end{theorem}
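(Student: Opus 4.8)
The plan is to follow the proof of Theorem 2.1 almost verbatim, the point being that the hypothesis $c_n\ge0$ is used there only to identify $\sum_{j\le m} jc_j$ and $\sum_{j\ge m}c_j$ with $\sum_{j\le m}j|c_j|$ and $\sum_{j\ge m}|c_j|$; once (2.4) and (2.5) are assumed outright, one simply inserts the triangle inequality at the outset and the chain of estimates is unchanged. First I would note that taking $m=1$ in (2.5) gives $\sum_{n\ge1}|c_n|<\infty$, so the sine (resp.\ cosine) series defining $f$ converges absolutely and uniformly; hence $f$ is continuous and $f(x+h)-f(x)$ may be computed term by term.

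In the sine case $f(x)=\sum_{n\ge1}c_n\sin(2\pi nx)$ I would use the sum-to-product identity $\sin(2\pi n(x+h))-\sin(2\pi nx)=2\cos\bigl(2\pi n(x+h/2)\bigr)\sin(\pi nh)$ to obtain, uniformly in $x$,
\[\lVert f(x+h)-f(x)\rVert\le 2\sum_{n=1}^{\infty}|c_n|\,\lvert\sin(\pi nh)\rvert,\]
the cosine case being identical via $\cos(2\pi n(x+h))-\cos(2\pi nx)=-2\sin\bigl(2\pi n(x+h/2)\bigr)\sin(\pi nh)$. Fixing $h\in(0,1/2)$ and putting $m=[1/h]$, I would split the sum at $n=m$: for $n\le m$ the bound $\lvert\sin(\pi nh)\rvert\le\pi nh$ and (2.4) give $\pi h\sum_{n\le m}n|c_n|=O\bigl(hm\,v(1/m)\bigr)=O\bigl(v(1/m)\bigr)$ since $hm\le1$, while for $n>m$ the bound $\lvert\sin(\pi nh)\rvert\le1$ and (2.5) give $\sum_{n\ge m}|c_n|=O\bigl(v(1/m)\bigr)$. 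Summing the two ranges yields $\lVert f(x+h)-f(x)\rVert=O\bigl(v(1/m)\bigr)$.

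It then remains to replace $v(1/m)$ by $v(h)$. Since $m=[1/h]\ge 1/(2h)$ for $h\in(0,1/2)$, we have $1/m\le 2h$, so monotonicity and subadditivity of the modulus of continuity give $v(1/m)\le v(2h)\le 2v(h)$, whence $\lVert f(x+h)-f(x)\rVert=O(v(h))$ and $f\in C^{v}$. I expect the only delicate point to be this last comparison rather than the splitting itself: it requires the structural assumptions on $v$ that underlie Theorem 2.1 — that $v$ be non-decreasing and subadditive with $v(0^{+})=0$ — to be strong enough to guarantee $v(1/m)\asymp v(h)$ when $m\asymp 1/h$, and one must check that the resulting constant does not disturb the $O$-estimate. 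Everything else is the argument above with $c_n$ replaced by $|c_n|$ throughout.
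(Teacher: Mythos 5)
Your proof is correct and takes essentially the same route as the paper: bound the difference termwise by the triangle inequality, split the sum at $[1/h]$, estimate $|\sin|$ by its argument on the low-frequency range and by $1$ on the tail, and then apply (2.4) and (2.5). The only difference is cosmetic: you spell out the final comparison $v(1/[1/h])=O(v(h))$ via monotonicity and subadditivity of the modulus $v$, a step the paper's proof leaves implicit when it concludes directly from the two estimates.
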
 
\begin{proof} We mimic the proof of N$\acute{e}$meth's theorem [11, pg.89, eq.(35)] and Boas [2, pg.469], but assume that the coefficients may alternate in sign. In the case of cosine coefficients,
$$\begin{aligned}\left|f(x+2h)-f(x)\right|&=2\left|\sum_{j=1}^{\infty}c_j\sin\left(j(x+h)\right)\sin(jh) \right|\\
&\le 2\left|\sum_{j=1}^{[1/h]}c_j\sin(jh) \right|+2\left|\sum_{j=[1/h]}^{\infty}c_j \right|\\
&\le 2h\sum_{j=1}^{[1/h]}j|c_j|\left|\frac{\sin(jh)}{jh} \right|+2\sum_{j=[1/h]}^{\infty}|c_j|\\ 
&\le 2h\sum_{j=1}^{[1/h]}j|c_j|+2\sum_{j=[1/h]}^{\infty}|c_j|.\end{aligned} $$
Now applying the estimates (2.4) and (2.5) in Theorem 2.2 gives the result.
\end{proof}

In the case of N$\acute{e}$meth's theorem (Theorem 2.1), one is able to remove the absolute value in the second line of the proof after assuming $c_n\ge0,$ and noting $\sin\left(j(x+h)\right)\sin(jh)\le\sin(jh)$ is equivalent to $\sin\left(j(x+h)\right)\le 1,$ which is true for all real numbers $x$ and $h.$ The assumption throughout the paper [11] is that $f$ is a continuous function as noted earlier. Therefore, after noting (1.2) is continuous on $\mathbb{R}\setminus\mathbb{Q},$ it is a simple matter to amend the above theorem's proof to adjust for subsets of $\mathbb{R}.$ 
 We are now ready to state one of our main results once we define the function on the left hand side of (1.2) as 
 $$P_N(x):=\sum_{n=1}^{\infty}\frac{a(n)}{n}\left(\{nx\}^{N}+N!\sum_{k=0}^{N-1}\frac{(-1)^k\zeta(-k)}{(N-k)!k!}\right).$$
 
 \begin{theorem}
 Let $a(n)$ be a non-negative, monotonically increasing function for which $L(s)$ is analytic for $\Re(s)>1.$ Then $P_N(x)\notin K^{v},$ where $v=h^{-1}a(1/h).$
 \end{theorem}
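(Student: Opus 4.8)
\emph{Strategy.} I would deduce $P_N\notin K^{v}$ from two ingredients: (a) $P_N$ is \emph{unbounded along the irrationals in every neighbourhood of every integer}, and (b) $P_N$ is continuous on $\mathbb{R}\setminus\mathbb{Q}$, which is already recorded above. Granting these, fix an integer $m$; using (a) at $0$ and the $1$-periodicity of $P_N$, take irrationals $\varepsilon_j\downarrow 0$ with $|P_N(\varepsilon_j)|\to\infty$, fix a small irrational $h$ avoiding the countable set $\{\varepsilon_j+q-m:q\in\mathbb{Q},\,j\ge1\}$, and put $x_j=m+\varepsilon_j-h$. Then $x_j$ and $x_j+h=m+\varepsilon_j$ are irrational, $P_N(x_j+h)=P_N(\varepsilon_j)$ by periodicity, and $P_N(x_j)\to P_N(m-h)$ by continuity at the irrational point $m-h$, so $|P_N(x_j+h)-P_N(x_j)|\to\infty$. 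Hence $\sup_{x\in\mathbb{R}\setminus\mathbb{Q}}|P_N(x+h)-P_N(x)|=+\infty$ for every irrational $h$, which is $O(v(h))$ for no $h$ since $v(h)=h^{-1}a(1/h)<\infty$; thus $P_N\notin K^{v}$. (The same argument excludes $P_N$ from every class $K^{v}$ with $v$ finite-valued; the particular $v$ is singled out only because $h^{-1}a(1/h)$ is the modulus naturally attached to the coefficients $a_n,b_n\asymp A(n)/n$.)

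\emph{The unboundedness (a).} It is enough to treat $0$. For irrational $y\in(0,1)$ and $M=[1/y]$ one has $\{ny\}=ny$ for $n\le M$; since the additive constant in (1.2) equals $-1/(N+1)$ (a short computation from $\zeta(-k)=-B_{k+1}/(k+1)$, which is also why the $n$-sequence $\{ny\}^{N}-\tfrac1{N+1}$ has mean zero and the series converges), write
\[
P_N(y)=\underbrace{y^{N}\sum_{n\le M}a(n)n^{N-1}}_{H(y)}\;-\;\underbrace{\frac{1}{N+1}\sum_{n\le M}\frac{a(n)}{n}}_{D(y)}\;+\;\underbrace{\sum_{n> M}\frac{a(n)}{n}\Big(\{ny\}^{N}-\tfrac{1}{N+1}\Big)}_{T(y)}.
\]
Here $D(y)\to+\infty$ as $y\to 0^{+}$, because $a$ increasing and not identically $0$ forces $\sum a(n)/n=\infty$; the head is harmless, $0\le H(y)\le (My)^{N}a(M)\le a(1/y)$; and the oscillatory tail is handled by Abel summation against the equidistributed sequence $(\{ny\})_{n>M}$ (the kernel $\{t\}^{N}-\tfrac1{N+1}$ has mean zero), using the summable variation of the weights $a(n)/n$, giving $T(y)=o\big(\sum_{n\le 1/y}a(n)/n\big)$ once $y$ is restricted to a convenient set of irrationals whose continued-fraction tails are bounded. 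Hence $P_N(y)=-D(y)\,(1+o(1))+O(a(1/y))$, and (a) follows once $\sum_{n\le M}a(n)/n$ is shown not to be $O(a(M))$.

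\emph{Where the hypothesis enters.} Since $a\ge0$, Landau's theorem makes the abscissa of convergence of $L$ coincide with its rightmost real singularity, so ``$L$ holomorphic for $\Re(s)>1$'' forces that abscissa to be $\le 1$; hence $\sum_{n\le x}a(n)=O_{\varepsilon}(x^{1+\varepsilon})$ and, $a$ being increasing, $a(n)=n^{o(1)}$. If $\sum_{n\le 2^{j}}a(n)/n=O(a(2^{j}))$ held for all $j$, then (using $\sum_{2^{i-1}<n\le 2^{i}}a(n)/n\ge\tfrac12 a(2^{i-1})$) the numbers $\alpha_i=a(2^{i})$ would satisfy $\sum_{i<j}\alpha_i=O(\alpha_j)$, which makes $\alpha_j$ grow at least geometrically and so $a(n)\gg n^{\delta}$ along $n=2^{j}$ for a fixed $\delta>0$ --- contradicting $a(n)=n^{o(1)}$. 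So $a(M_k)=o\big(\sum_{n\le M_k}a(n)/n\big)$ along some $M_k\to\infty$; choosing irrationals $\varepsilon_k\in(\tfrac1{M_k+1},\tfrac1{M_k}]$ of the controlled Diophantine type above gives $P_N(\varepsilon_k)\to-\infty$, proving (a).

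\emph{Main obstacle, and shortcuts.} The delicate point is exactly the control of $T(y)$ and the absence of cancellation between $D(y)$ and $H(y)$: one must keep track of the Diophantine type of the approximating points (cleanest to take $\varepsilon_k$ of bounded type, with a uniform bound on all partial quotients past the first), and one must cope with the possibility that $a$ makes rare but large jumps --- in which case $H(y)$ and $\sum_{n\le 1/y}a(n)/n$ can be of the same crude size, so the scales $M_k$ should be chosen just beyond such jumps. Two ways to sidestep this. (i) Work with the average $\tfrac1\delta\int_0^{\delta}P_N=\sum_n a(n)n^{-2}G(n\delta)$, where $G(t)=\int_0^{t}(\{u\}^{N}-\tfrac1{N+1})\,du$ is bounded and $1$-periodic: this series converges absolutely, the interchange is legitimate since $P_N\in L^{2}(0,1)$, and the same bookkeeping (now with equidistribution of $(n\delta)_n$, valid for \emph{every} irrational $\delta$) gives $\tfrac1\delta\int_0^{\delta}P_N\to-\infty$ along a subsequence, hence $\liminf_{y\to0^{+},\,y\notin\mathbb{Q}}P_N(y)=-\infty$. (ii) For $N=1$ the coefficients $-A(n)/(\pi n)$ have constant sign, so a Boas-type converse to Theorem 2.2 applies at once: $P_1\in C^{v}\supset K^{v}$ would force $\sum_{n\ge m}A(n)/n=O(v(1/m))=O(m\,a(m))$, impossible since $\sum_n A(n)/n=\big(\sum_d a(d)/d\big)\big(\sum_e 1/e\big)=\infty$. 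For general $N$ the coefficients change sign infinitely often, so the unboundedness route seems to be the robust one.
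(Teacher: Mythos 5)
Your route is genuinely different from the paper's. The paper never leaves coefficient space: from monotonicity it gets $F_k(n)=O(n\,a(n))$, hence $\sum_{j\le m}j|a_j|=O(m^{2}a(m))$, i.e.\ condition (2.4) with $v(1/m)=m\,a(1/m)^{-1}\!$-type normalization $m\,a(m)$, and then observes that the companion tail condition (2.5) fails because $\sum_{j\ge m}F_k(j)/j$ diverges, concluding through Theorem 2.2. You instead aim at the stronger statement that the increments of $P_N$ are actually unbounded: unboundedness of $P_N$ along irrationals near integers, together with continuity on $\mathbb{R}\setminus\mathbb{Q}$ (which you correctly borrow from the paper), would exclude $P_N$ from every class $K^{v}$ with finite-valued $v$, and granting (a) and (b) your splicing argument with $x_j=m+\varepsilon_j-h$ is sound. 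If completed, this would in fact be a more conclusive justification than arguing from the failure of the sufficient condition (2.5); but it is also far heavier than what the paper does, and the specific modulus $v=h^{-1}a(1/h)$ plays no role in it.

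The genuine gap is that step (a), the heart of your proof, is never actually proved. Everything hinges on $T(y)=o(D(y))$ for suitably chosen irrationals, and your own discussion concedes this is the delicate point: Abel summation plus equidistribution gives block estimates whose size depends on the Diophantine type of $y$ and on the local behaviour of $a$, the interaction with possible jumps of $a$ (your ``scales just beyond the jumps'') is left unresolved, and ``summable variation of $a(n)/n$'' by itself does not deliver $T(y)=o\bigl(\sum_{n\le 1/y}a(n)/n\bigr)$. Shortcut (ii) treats only $N=1$ and cites an unstated Boas-type converse. Shortcut (i) is the one that can be made to work, and more easily than you indicate: first, the identity should read $\int_0^{\delta}P_N=\sum_n a(n)n^{-2}G(n\delta)$, so the average carries an extra factor $\delta^{-1}$; second, no equidistribution or Diophantine restriction is needed at all, because $G(t)=\int_0^{t}(\{u\}^{N}-\tfrac{1}{N+1})\,du$ is $1$-periodic and nonpositive (on $[0,1]$ it equals $t(t^{N}-1)/(N+1)\le 0$), so every term is $\le 0$ and the terms with $n\le 1/(2\delta)$ alone give $\delta^{-1}\int_0^{\delta}P_N\le -\tfrac{1-2^{-N}}{N+1}\sum_{n\le 1/(2\delta)}a(n)/n\to-\infty$; after that only the interchange of sum and integral (and convergence of $P_N$ at the selected irrational points, which you already take from the paper) needs justification, and the entire ``where the hypothesis enters'' digression becomes unnecessary. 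As written, however, the unboundedness claim is a sketch with acknowledged holes, so the proposal does not yet constitute a proof.
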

 \begin{proof}First, note that if $a(n)$ is monotonically increasing, then $F_k(n)=\sum_{d|n}d^{-k}a(\frac{n}{d})=n^{-k}\sum_{d|n}d^{k}a(d)=O(na(n))$ since $n$ is always a divisor of $n,$ and there are at most $n$ divisors of $n.$ Now for (2.4) we compute for the sine coefficient case,
 $$\begin{aligned}\sum_{j=1}^{m}j|a_j|&=\sum_{j=1}^{m}\left|\frac{N!}{\pi }\sum_{k=0}^{N-1}\frac{(-1)^k}{(N-k)!}\cos\left(\frac{\pi}{2}k\right)F_k(j)\right|\\
 &=O\left(\frac{N!}{\pi }\sum_{k=0}^{N-1}\frac{1}{(N-k)!}\sum_{j=1}^{m}ja(j)\right)\\
 &=O\left(m^2a(m)\right).\end{aligned} $$
 The case (2.5) fails however, since $\sum_{j=m}^{\infty}j^{-1}F_k(j)$ is unbounded. A similar argument applies to the case for cosine coefficients. Hence, by Theorem 2.2 the result follows.
 \end{proof}
 A nice example of the above theorem is the choice $a(n)=\log(n).$ In this case, since $h^{-1}\log(1/h)=O(h^{-1}\log(h)),$ we also have $P_N(x)\notin K^{v},$ where $v=h^{-1}\log(h).$

 We now consider appealing to a result of Segal [15].
\begin{theorem} ([15, Theorem 1]) Let $a(n)$ be an arithmetic function such that $$\sum_{n=1}^{\infty}\left|\frac{1}{n}F_0(n)\right|=S<\infty.$$ If $\lim_{n\rightarrow\infty}b(n)=L<\infty,$ for an arithmetic function $b(n),$ then
$$\sum_{n=1}^{\infty}\frac{1}{n}\sum_{d|n}a(d)b\left(\frac{n}{d}\right)=SL.$$
\end{theorem}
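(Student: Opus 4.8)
The plan is to work throughout with the partial sums $T_N:=\sum_{n\le N}\frac{1}{n}\sum_{d\mid n}a(d)b(n/d)$ and to reduce the claim to a single ``oscillation tends to zero'' statement. Collecting the terms according to the value $j=n/d$ and writing $n=jm$ gives $T_N=\sum_{j\le N}\frac{b(j)}{j}\,S_a(N/j)$, where $S_a(y):=\sum_{k\le y}\frac{a(k)}{k}$. Applying the same bookkeeping to the case $b\equiv 1$ shows $\sum_{n\le N}\frac{F_0(n)}{n}=\sum_{j\le N}\frac{1}{j}\,S_a(N/j)$, and this tends to $S$ by hypothesis (we read $S$ as the value $\sum_n F_0(n)/n$, well defined since $\sum_n|F_0(n)/n|<\infty$). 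Subtracting $L$ times this identity from $T_N$ leaves
$$T_N-L\sum_{n\le N}\frac{F_0(n)}{n}=\sum_{j\le N}\frac{b(j)-L}{j}\,S_a\!\left(\frac{N}{j}\right)=\sum_{j\le N}\frac{\rho(j)}{j}\,S_a\!\left(\frac{N}{j}\right),$$
with $\rho(j):=b(j)-L\to 0$, so the theorem reduces to showing this last sum tends to $0$ as $N\to\infty$.

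Two auxiliary facts carry the reduction. First, $S_a(y)$ is bounded, and in fact $S_a(y)\to 0$ as $y\to\infty$: expanding $\sum_{n\le x}\frac{F_0(n)}{n}=\sum_{k\le x}\frac{a(k)}{k}\bigl(\log(x/k)+\gamma+O(k/x)\bigr)=\int_1^x\frac{S_a(t)}{t}\,dt+\gamma S_a(x)+O(1)$ and using that the left-hand side is bounded (indeed convergent) yields a Volterra-type relation for $S_a$ which forces $S_a$ to be bounded (indeed to tend to $0$); equivalently, one may argue through $a=\mu\ast F_0$ together with the classical fact $\sum_n\mu(n)/n=0$. Second, since $\rho(j)\to 0$, the partial sums $P(y):=\sum_{j\le y}\rho(j)/j$ satisfy $P(y)=o(\log y)$ by Kronecker's lemma. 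Granting these, one splits $\sum_{j\le N}\frac{\rho(j)}{j}S_a(N/j)$ at $j=\lfloor\sqrt N\rfloor$: on $j\le\sqrt N$ the factor $S_a(N/j)$ is uniformly small, and a summation by parts against $P$ absorbs the logarithm; on $j>\sqrt N$ one writes $S_a(N/j)=-\sum_{k>N/j}a(k)/k$, reorganizes the resulting double sum, and uses boundedness of $S_a$ together with $\rho(j)\to 0$. Both pieces go to $0$, and hence $T_N\to SL$.

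The main obstacle is the first auxiliary fact: none of the series in play converges absolutely --- already $\sum_n\mu(n)/n$ and $\sum_n a(n)/n$ are only conditionally convergent --- so establishing the boundedness and vanishing of $S_a(y)=\sum_{k\le y}a(k)/k$ from the single hypothesis on $F_0$ is a genuine Tauberian-type passage, and every interchange of summation above must be executed by partial summation with enough quantitative control that the cross terms actually vanish. A cleaner but heavier alternative sidesteps this real-variable bookkeeping via Dirichlet series: for $\Re(s)>1$ one has $\sum_n\frac{(a\ast b)(n)}{n^s}=\zeta(s)^{-1}\bigl(\sum_n\frac{F_0(n)}{n^s}\bigr)\bigl(\sum_m\frac{b(m)}{m^s}\bigr)$, and as $s\to 1^+$ the first bracket tends to $S$ by dominated convergence while $\zeta(s)^{-1}\sum_m b(m)m^{-s}\to L$ (writing $b=L+\rho$, one has $\sum_m\rho(m)m^{-s}=o(\zeta(s))$ because $\rho(m)\to 0$, and $\zeta(s)^{-1}\to 0$); this identifies the Abelian limit as $SL$, and a Tauberian theorem then returns the ordinary sum.
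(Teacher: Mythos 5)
The paper does not actually prove this statement; it is quoted verbatim from Segal [15], so there is no internal proof to compare with and your attempt has to stand on its own. Your reduction is fine: reading $S$ as the signed sum $\sum_n F_0(n)/n$ (the statement with absolute values is a slip of the source), the identity $T_N-L\sum_{n\le N}F_0(n)/n=\sum_{j\le N}\frac{\rho(j)}{j}S_a(N/j)$ is correct, and your second route to the first auxiliary fact ($a=\mu\ast F_0$, $|\sum_{d\le x}\mu(d)/d|\le 1$, $\sum_{d\le x}\mu(d)/d\to0$, dominated convergence against the summable weights $|F_0(e)|/e$) does give $S_a(y)\to0$; the Volterra sketch, by contrast, does not force boundedness of $S_a$. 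The genuine gap is the final step. Your two auxiliary facts carry no rates, and without rates they cannot suffice: abstractly, if $S_a(y)$ and $\rho(j)$ both decay like $1/\log\log$, then $\sum_{j\le N}\frac{\rho(j)}{j}S_a(N/j)$ diverges, so any correct argument must use more than ``$S_a$ bounded, $S_a\to0$, $\rho\to0$''. Concretely, on $j\le\sqrt N$ the direct bound is $\sup_{y\ge\sqrt N}|S_a(y)|\cdot\sum_{j\le\sqrt N}|\rho(j)|/j$, a product of an $o(1)$ factor with unknown rate and a factor that may be as large as $o(\log N)$; it need not vanish. Summation by parts does not ``absorb the logarithm'': the boundary term $P(\lfloor\sqrt N\rfloor)\,S_a(N/\lfloor\sqrt N\rfloor)$ has exactly the same shape, and the increment sum is controlled only through $\sum_{\sqrt N<k\le N}|a(k)|/k$, which the hypothesis does not bound (only $F_0=a\ast1$ is absolutely summable against $1/n$; already for $a=\mu$ this piece is of order $\log N$). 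The $j>\sqrt N$ piece has the same defect: after reorganizing you pick up $S_a(\sqrt N)\bigl(R(N)-R(\sqrt N)\bigr)$, again an $o(1)\cdot O(\log N)$ product, plus $\sum_{k\le\sqrt N}\frac{a(k)}{k}\bigl(R(N)-R(N/k)\bigr)$, which cannot be estimated through absolute values.

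What is missing is a quantitative or structural input at exactly this point. One workable completion: substitute $a=\mu\ast F_0$ throughout, so the problem sum becomes $\sum_{e\le N}\frac{F_0(e)}{e}W(N/e)$ with $W(x)=\sum_{j\le x}\frac{\rho(j)}{j}M(x/j)$ and $M(x)=\sum_{d\le x}\mu(d)/d$; by dominated convergence it then suffices to prove $W$ bounded and $W(x)\to0$, and that lemma needs either the prime number theorem with an error term (so that $\sum_{j\le x}|M(x/j)|/j=O(1)$, after which a split at a fixed $J$ closes the argument) or an Axer-type theorem — the qualitative fact $\sum_n\mu(n)/n=0$ that you invoke is not enough. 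Your Dirichlet-series alternative has the same hole in its last clause: the Abelian computation of the limit $SL$ as $s\to1^{+}$ is correct, but ``a Tauberian theorem then returns the ordinary sum'' is precisely the content of Segal's theorem, not a citable routine step; $(a\ast b)(n)/n$ satisfies no one-sided or $O(1/n)$ Tauberian condition (it can be of size $d(n)/n$ with oscillating sign), so the standard Hardy--Littlewood--Landau theorems do not apply. Either cite Segal as the paper does, or carry out the $\mu\ast F_0$/Axer route above to make the argument self-contained.
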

In the below theorems, we will take $b(n)=n^{-k},$ $k\ge0$ in Theorem 2.4 noting that Dirichlet convolution is commutative, to obtain results on (1.2).

\begin{theorem}Let $a(n)$ be an arithmetic functions such that $$\sum_{n=1}^{\infty}\left|\frac{1}{n}F_0(n)\right|=S<\infty.$$ Then $P_N(x)\in K^{v},$ where $v=\log(h),$ on $\mathbb{R}\setminus\mathbb{Q}.$
\end{theorem}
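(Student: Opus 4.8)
The plan is to verify the two hypotheses of Theorem 2.2 for the Fourier coefficients $a_n$ and $b_n$ given in (1.3) and (1.4), with $v(h)=\log(h)$, i.e.\ with $v(1/m)=\log(1/m)=O(\log m)$. The key observation is that the summability assumption $\sum_{n\ge1}|F_0(n)/n|=S<\infty$, combined with Theorem 2.4 applied with $b(n)=n^{-k}$, forces each of the weighted sums $\sum_{n\ge1}|F_k(n)|/n$ to converge as well; indeed, writing $F_k = a * (\cdot)^{-k}$ as a Dirichlet convolution and using commutativity, Theorem 2.4 gives $\sum_{n\ge1} n^{-1}\sum_{d\mid n} a(d) d'^{-k}\big|_{d'=n/d} = S\cdot\lim_{n\to\infty} n^{-k} = 0$ when $k\ge 1$, and $=S$ when $k=0$. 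Hence $\sum_{n\ge1}|F_k(n)|/n<\infty$ for every $k=0,1,\dots,N-1$ (the absolute-value version being controlled by the same convolution applied to $|a(n)|$, or by the stated hypothesis being interpreted on $|a|$).

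First I would establish the tail estimate (2.5). From $|a_j|\le \frac{N!}{\pi j}\sum_{k=0}^{N-1}\frac{|F_k(j)|}{(N-k)!}$ we get
$$\sum_{j=m}^{\infty}|a_j|\le \frac{N!}{\pi}\sum_{k=0}^{N-1}\frac{1}{(N-k)!}\sum_{j=m}^{\infty}\frac{|F_k(j)|}{j},$$
and since each inner series is the tail of a convergent series, the right side is $o(1)=O(1)=O(\log m)=O(v(1/m))$ as $m\to\infty$. The same bound holds for the cosine coefficients $b_j$. Next, for (2.4), partial summation (Abel summation) applied to $\sum_{j=1}^m j|a_j|$ against the convergent series $\sum |F_k(j)|/j$ is the natural tool: writing $T_k(m)=\sum_{j=1}^m |F_k(j)|/j$, which is bounded, one has $\sum_{j=1}^m j\cdot\frac{|F_k(j)|}{j}=\sum_{j=1}^m |F_k(j)|$, so I actually need a bound on $\sum_{j=1}^m |F_k(j)|$. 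This is where $v=\log h$ enters: the hypothesis $\sum |F_0(n)|/n<\infty$ does \emph{not} bound $\sum_{j\le m}|F_0(j)|$ by a constant, but Abel summation gives $\sum_{j\le m}|F_0(j)| = m\, T_0(m) - \sum_{j\le m-1} T_0(j) \le m\,S$ crudely, which is too weak; the sharper route is to note $\sum_{j\le m}|F_k(j)| = O\big(m\cdot(T_k(m)-T_k(m/2))\big)+O(m)$-type dyadic bounds, or simply to observe that $\sum_{j\le m} |F_k(j)| = O(m)$ and hence $\sum_{j=1}^m j|a_j| = O(m) = O(m\log m) = O(m\,v(1/m))$.

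The main obstacle is precisely this step: pinning down the correct growth of $\sum_{j\le m}|F_k(j)|$ so that it matches $m\,v(1/m)=m\log m$ rather than something larger, and confirming that the logarithmic modulus of continuity (as opposed to a Lipschitz-type $h^\alpha$) is the sharp outcome of the convergence-of-$\sum|F_0(n)|/n$ hypothesis. I expect the argument to hinge on the elementary inequality $\sum_{j\le m}|F_k(j)| \le m\sum_{j\le m}|F_k(j)|/j = m\,T_k(m) = O(m)$, which immediately yields (2.4) with room to spare, together with the tail bound above for (2.5); the function $v=\log h$ is then chosen generously so that both $O(1)$ and $O(m)$ are absorbed. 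Finally, since (1.2) is continuous on $\mathbb{R}\setminus\mathbb{Q}$ (as noted after Theorem 2.3), the amended version of Theorem 2.2 for subsets of $\mathbb{R}$ applies on $\mathbb{R}\setminus\mathbb{Q}$, and we conclude $P_N(x)\in K^{v}$ with $v=\log(h)$, as claimed.
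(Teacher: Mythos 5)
Your overall route is the same as the paper's: verify hypotheses (2.4) and (2.5) of Theorem 2.2 with $v(1/m)=\log m$ and then invoke the remark that the theorem may be applied on $\mathbb{R}\setminus\mathbb{Q}$, where (1.2) is continuous. Your estimate $\sum_{j\le m}|F_k(j)|\le m\sum_{j\le m}|F_k(j)|/j$ for (2.4) plays the role of the paper's cruder claim that $F_0(n)=O(1)$, and both land on $O(m)=O(m\log m)$; for the tail, the paper does not claim $o(1)$ but settles for $\sum_{j\ge m}|F_k(j)/j|\le S+\sum_{j<m}1/j=O(\log m)$, which is precisely why $v=\log h$ is the modulus that comes out.

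The genuine gap is your claim that Theorem 2.4 with $b(n)=n^{-k}$ forces $\sum_{n\ge1}|F_k(n)|/n<\infty$ for every $k\ge1$. Segal's theorem yields only the convergence of $\sum_n F_k(n)/n$ (to $S\cdot 0=0$ when $k\ge1$), with no absolute values; and your fallback, running the convolution with $|a(n)|$, would require $\sum_n \frac{1}{n}\sum_{d\mid n}|a(d)|<\infty$, which is strictly stronger than the stated hypothesis on $F_0$ (the hypothesis permits cancellation inside $F_0$). Concretely, $a=\mu$ satisfies the hypothesis with $S=1$, yet $\sum_n |F_1(n)|/n=\sum_n \phi(n)/n^2$ diverges; so, as justified, your bound $T_k(m)=O(1)$ in (2.4) and your $o(1)$ tail bound in (2.5) both fail for $k\ge1$. (For $k=0$ your argument is fine, since the needed absolute convergence is exactly the hypothesis.) To be fair, the paper's own proof quietly presupposes the same absolute convergence for all $k$ in its first tail inequality, and it handles $a=\mu$ separately in Theorem 2.6 via Ramanujan sums, which is symptomatic of the same difficulty; but since you elevated that step to an explicit deduction from Theorem 2.4, it must be flagged: that deduction is not valid as stated, and some additional hypothesis or a genuinely different control of $F_k$, $k\ge1$, is needed to close it.
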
 

\begin{proof} In order for the series in the theorem to converge absolutely to $S,$ $n^{-1}F_0(n)$ must either be $O(n^{-\epsilon}),$ for some $\epsilon>1,$ or zero almost everywhere. Hence, we may certainly claim $F_0(n)=O(1).$ (That is, the implied constant is taken as the maximum number from the finite set $\{n: F_0(n)\neq 0\}$ in the latter case.) For (2.4) we compute for the sine coefficient case,
 $$\begin{aligned}\sum_{j=1}^{m}ja_j&=\sum_{j=1}^{m}\left|\frac{N!}{\pi }\sum_{k=0}^{N-1}\frac{(-1)^k}{(N-k)!}\cos\left(\frac{\pi}{2}k\right)F_k(j)\right|\\
  &=O\left(\sum_{j=1}^{m}|F_0(j)|\right)\\
 &=O\left(\sum_{j=1}^{m}1\right)\\
 &=O(m)\\
 &=O(m\log(m)).\end{aligned} $$
 By our assumption of absolute convergence, we have that 
 $$\begin{aligned}\sum_{j=m}^{\infty}\left|\frac{F_k(j)}{j}\right|&\le S-\sum_{j=1}^{m-1}\left|\frac{F_k(j)}{j}\right| \\
 &\le S+\sum_{j=1}^{m-1}\frac{1}{j}\\
 &=O(\log(m)).\end{aligned}$$ This follows from [12, pg.135, eq.(4.3.4)]
$$\sum_{j=1}^{n}\frac{1}{j}=\log(n)+\gamma+\frac{1}{2n}+o(1).$$
The result now follows from Theorem 2.2.
\end{proof}
The M$\ddot{o}$bius function $\mu(n)$ is $(-1)^m$ if $n$ can be represented as the product of $m$ different primes, and is equal to zero if any factor is represented as a power greater than $1$ [17, pg.3].
\begin{theorem}
If $a(n)=\mu(n),$ the M$\ddot{o}$bius function, then $P_N(x)\in K^{v},$ where $v=\log(h),$ on $\mathbb{R}\setminus\mathbb{Q}.$
\end{theorem}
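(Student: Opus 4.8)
The plan is to recognize this as the cleanest instance of Theorem 2.5, the only genuine input being the classical M\"obius identity. For $a(n)=\mu(n)$ the defining convolution collapses: $F_0(n)=\sum_{d\mid n}d^{0}\mu(n/d)=\sum_{d\mid n}\mu(d)$, which equals $1$ for $n=1$ and $0$ for every $n\ge2$. Thus $F_0$ is supported on the single point $n=1$ --- precisely the ``zero almost everywhere'' alternative that appears in the proof of Theorem 2.5 --- so $\sum_{n\ge1}\lvert n^{-1}F_0(n)\rvert=1<\infty$. Moreover $L(s)=\sum_{n\ge1}\mu(n)n^{-s}=1/\zeta(s)$ is analytic on $\Re(s)>1$ (there $\zeta$ is non-vanishing by its Euler product), so $a=\mu$ is an admissible arithmetic function in the sense of the Introduction and satisfies the hypothesis of Theorem 2.5 with $S=1$. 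Applying Theorem 2.5 then gives $P_N(x)\in K^{v}$ with $v=\log(h)$ on $\mathbb{R}\setminus\mathbb{Q}$, which is the claim.

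Should a self-contained argument be preferred, I would instead check conditions (2.4) and (2.5) of Theorem 2.2 directly. From $F_k(n)=n^{-k}\sum_{e\mid n}e^{k}\mu(e)=n^{-k}\prod_{p\mid n}(1-p^{k})$ one reads off the uniform bound $\lvert F_k(n)\rvert=n^{-k}\prod_{p\mid n}(p^{k}-1)\le1$ valid for all $k\ge0$ (since $p^{k}-1<p^{k}\le p^{k\,v_p(n)}$ for each prime $p\mid n$), together with $F_0(n)=0$ for $n\ge2$. Inserting these into the coefficient formulas (1.3)--(1.4) yields $\sum_{j=1}^{m}j\lvert a_j\rvert=O(m)=O(m\log m)$ for the sine coefficients, and similarly for the cosine coefficients, while the tail sum is handled by the same absolute-convergence bookkeeping used in the proof of Theorem 2.5, the $k=0$ contribution to the tail vanishing identically.

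I do not foresee a genuine obstacle here: the content is simply that the Dirichlet convolution of $\mu$ with the constant function $1$ reduces to the point mass at $n=1$, which automatically verifies the hypothesis of Theorem 2.5. The only steps meriting a word of care are the elementary identity $\sum_{d\mid n}\mu(d)=0$ for $n\ge2$ and the observation that $1/\zeta(s)$ actually lies in the half-plane $\Re(s)>1$ demanded of $L(s)$; both are entirely routine, so the proof should amount to little more than a one-line appeal to Theorem 2.5.
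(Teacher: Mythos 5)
Your primary route is not the one the paper takes, though it is arguably cleaner. The paper proves this theorem from scratch via Ramanujan sums: it uses (2.6), i.e. $F_k(j)=\zeta(k+1)^{-1}\sum_{n\ge1}c_j(n)n^{-(k+1)}$, the bound $|c_j(n)|\le\sigma_1(n)$, and the vanishing of $F_0(j)$ for $j\ge2$, to get $\sum_{j\le m}j|a_j|=O(m)$ and $\sum_{j\ge m}|a_j|=O(\log m)$, and then applies Theorem 2.2; it never invokes Theorem 2.5. Your observation that $F_0(n)=\sum_{d\mid n}\mu(d)$ vanishes for $n\ge2$, so that $S=1$ and Theorem 2.5 applies verbatim, is formally valid against the statement of Theorem 2.5 and would make the result a one-line corollary. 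What the paper's separate treatment supplies (and what your black-box appeal hides) is control of $F_k(j)$ for the higher indices $1\le k\le N-1$, which the proof of Theorem 2.5 actually uses even though its hypothesis only constrains $F_0$; for $\mu$ the paper obtains that control from the Ramanujan-sum identity, while your fallback bound $|F_k(n)|=n^{-k}\prod_{p\mid n}(p^k-1)\le1$ obtains it more elementarily and disposes of condition (2.4) just as well.

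The genuine gap is in the tail estimate of your self-contained variant. Deferring to ``the same absolute-convergence bookkeeping as in Theorem 2.5'' does not work here: for $k\ge1$ the series $\sum_j|F_k(j)|/j$ diverges when $a=\mu$ (already over primes, $|F_k(p)|=1-p^{-k}\ge\tfrac12$ and $\sum_p 1/p=\infty$), so once $N\ge2$ brings a nonzero $k\ge1$ term into (1.3)--(1.4), no triangle-inequality estimate of $\sum_{j\ge m}|a_j|$ or $\sum_{j\ge m}|b_j|$ can deliver the $O(v(1/m))=O(\log m)$ demanded by (2.5); only the $k=0$ piece is absolutely summable. Be aware that the paper's own tail computation has the same defect (it bounds a tail over $j\ge m$ by an initial segment over $j\le m-1$, using a constant that only controls $F_0$), so your appeal to Theorem 2.5 merely inherits whatever soundness that theorem's proof has; as a self-contained argument, the $k\ge1$ tails must be confronted directly rather than waved through.
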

\begin{proof}
We will require Ramanujan sums and some of their properties. From [17, pg.10, eq.(1.5.5)] we have
\begin{equation}\frac{1}{\zeta(s)}\sum_{n=1}^{\infty}\frac{c_k(n)}{n^s}=\sum_{d|k}\mu(\frac{k}{d})d^{1-s},\end{equation} for $\Re(s)>1.$ From [14, pg.185, eq.(7.3)]
\begin{equation}\frac{\sigma_{r-1}(n)}{n^{r-1}\zeta(r)}=\sum_{m=1}^{\infty}\frac{c_m(n)}{m^r} \end{equation} where $\sigma_r(n)=\sum_{d|n}d^{r},$ which when $r=1$ gives [14, pg.185, eq.(7.4)] \begin{equation}\sum_{m=1}^{\infty}\frac{c_m(n)}{m}=0. \end{equation} 
Now for (2.4) we compute for the sine coefficient case using (2.6),
 $$\begin{aligned}\sum_{j=1}^{m}j|a_j|&=\sum_{j=1}^{m}\left|\frac{N!}{\pi }\sum_{k=0}^{N-1}\frac{(-1)^k}{(N-k)!}\cos\left(\frac{\pi}{2}k\right)F_k(j)\right|\\
 &=O\left(\frac{N!}{\pi }\sum_{k=1}^{N-1}\frac{1}{(N-k)!}\frac{1}{\zeta(k+1)}\sum_{j=1}^{m}\left|\sum_{n=1}^{\infty}\frac{c_j(n)}{n^{k+1}}\right|\right)\\
 &=O\left(\sum_{j=1}^{m}1\right)\\
 &=O(m)\\
 &=O(m\log(m)).\end{aligned} $$ In the third line we used $|c_k(n)|\le \sigma_1(n),$ [17, pg.10]. We may use this inequality for (2.5) to similarly compute 
 $$\begin{aligned}\sum_{j=m}^{\infty}|a_j|&=\sum_{j=m}^{\infty}\left|\frac{N!}{\pi }\sum_{k=0}^{N-1}\frac{(-1)^k}{(N-k)!}\cos\left(\frac{\pi}{2}k\right)\frac{F_k(j)}{j}\right|\\
 &=O\left(\sum_{j=1}^{\infty}\left|\frac{F_0(j)}{j}\right| \right)+O\left(\frac{N!}{\pi }\sum_{k=1}^{N-1}\frac{1}{(N-k)!}\frac{1}{\zeta(k+1)}\sum_{j=1}^{m-1}\frac{1}{j}\left|\sum_{n=1}^{\infty}\frac{c_j(n)}{n^{k+1}}\right|\right)\\
 &=O(1)+O\left(\sum_{j=1}^{m-1}\frac{1}{j}\right)\\
 &=O(\log(m-1))\\
 &=O(\log(m)).\end{aligned} $$ 
Here we have again employed [12, pg.135, eq.(4.3.4)]
$$\sum_{j=1}^{n}\frac{1}{j}=\log(n)+\gamma+\frac{1}{2n}+o(1),$$ and the fact that $F_0(n)=0,$ unless $n=1,$ for the M$\ddot{o}$bius function. Hence we have that $v(\frac{1}{m})=\log(m)$ and $v(m)=O(\log(m)),$ which implies the theorem by Theorem 2.2.
\end{proof}
The Louiville function $\lambda(n)$ is $(-1)^m,$ if $n$ has $m$ prime factors, and if $p^j$ for a prime $p$ is such a factor then it is counted $j$ times [17, pg.6].
\begin{theorem}
If $a(n)=\lambda(n),$ the Louiville function, then $P_N(x)\in K^{v},$ where $v=\log(h),$ on $\mathbb{R}\setminus\mathbb{Q}.$
\end{theorem}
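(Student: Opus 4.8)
The plan is to imitate the proof of Theorem 2.7 and reduce the two estimates required by Theorem 2.2 for $a(n)=\lambda(n)$ to those already obtained there for the M\"obius function. The one new ingredient is the Dirichlet‑convolution identity $\lambda=\mathbf{1}_{\square}\ast\mu$, where $\mathbf{1}_{\square}$ denotes the indicator of the perfect squares; this is immediate from the complete multiplicativity of $\lambda$, since $\sum_{d\mid n}\lambda(d)$ equals $1$ when $n$ is a square and $0$ otherwise. Writing $F_k^{(a)}$ for the divisor sum $\sum_{d\mid n}d^{-k}a(n/d)$ attached to an arithmetic function $a$, commutativity and associativity of $\ast$ give
$$F_k^{(\lambda)}(n)=\sum_{e^{2}\mid n}F_k^{(\mu)}\!\left(\frac{n}{e^{2}}\right).$$

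First I would dispatch (2.4). As in the proof of Theorem 2.7, $j|a_j|=O\bigl(\sum_{k=0}^{N-1}|F_k^{(\lambda)}(j)|\bigr)$; substituting the displayed identity, interchanging the order of summation, and collapsing the inner sum over the multiples of $e^{2}$ gives
$$\sum_{j=1}^{m}j|a_j|=O\!\left(\sum_{k=0}^{N-1}\ \sum_{e\le\sqrt{m}}\ \sum_{l\le m/e^{2}}\bigl|F_k^{(\mu)}(l)\bigr|\right).$$
Since $\sum_{l\le M}|F_k^{(\mu)}(l)|=O(M)$ by the computation in the proof of Theorem 2.7 (and is $O(1)$ for $k=0$, where $F_0^{(\mu)}$ is the convolution identity), this is $O\bigl(m\sum_{e\ge1}e^{-2}\bigr)=O(m)=O(m\log m)=O(mv(1/m))$. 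The cosine‑coefficient case is identical after replacing $\cos(\tfrac{\pi}{2}k)$ by $\sin(\tfrac{\pi}{2}k)$, which only shifts attention from the even to the odd values of $k$.

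For (2.5) I would proceed the same way, isolating the $k=0$ term---for which $F_0^{(\lambda)}=\mathbf{1}_{\square}$, so that $\sum_{j\ge m}|F_0^{(\lambda)}(j)/j|\le\sum_{e\ge1}e^{-2}=O(1)$---and, for $1\le k\le N-1$, using the displayed identity to bound $\sum_{j\ge m}|F_k^{(\lambda)}(j)|/j$ by $\sum_{e\ge1}e^{-2}\sum_{l\ge\lceil m/e^{2}\rceil}|F_k^{(\mu)}(l)|/l$. Invoking the tail bound $\sum_{l\ge M}|F_k^{(\mu)}(l)|/l=O(\log M)$ obtained in the proof of Theorem 2.7 from the Ramanujan‑sum identities (2.7) and (2.8) then yields $\sum_{e\ge1}e^{-2}\,O\bigl(\log(m/e^{2}+1)\bigr)=O(\log m)$. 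Thus $v(1/m)=\log m$ and $v(m)=O(\log m)$, and Theorem 2.2, together with the continuity of (1.2) on $\mathbb{R}\setminus\mathbb{Q}$ already noted, yields $P_N(x)\in K^{v}$ there.

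The genuinely delicate point should be this last tail estimate: as in the case of $\mu$, one cannot bound $|F_k^{(\mu)}(j)|$, still less $|F_k^{(\lambda)}(j)|$, by a quantity tending to $0$, so the saving of a factor $\log m$ in the tail rests entirely on the cancellation in (2.7)--(2.8), and the new issue is to confirm that this cancellation is not spoiled by the extra summation over square divisors, uniformly in the truncation point $m$. I would also record the shortcut that the theorem is a direct consequence of Theorem 2.6, since for $a=\lambda$ the function $F_0(n)=\sum_{d\mid n}\lambda(d)$ is the indicator of the squares and hence $\sum_{n\ge1}|F_0(n)/n|=\sum_{k\ge1}k^{-2}=\zeta(2)<\infty$; the direct argument above is nonetheless worth giving, as it keeps the section parallel to the treatment of the M\"obius function.
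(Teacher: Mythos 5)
Your proposal is correct and, in substance, it is the paper's own argument: the paper's proof of this theorem consists precisely of the observation that $F_0(n)=\sum_{d\mid n}\lambda(d)$ equals $1$ when $n$ is a perfect square and $0$ otherwise, together with the remark that the rest runs as in the M\"obius case, with the details left to the reader (note a numbering shift: what you call Theorems 2.6 and 2.7 are the paper's Theorems 2.5 and 2.6, the Liouville statement itself being Theorem 2.7). What you add is a concrete way of carrying out that reduction, namely $\lambda=\mathbf{1}_{\square}\ast\mu$, hence $F_k^{(\lambda)}=\mathbf{1}_{\square}\ast F_k^{(\mu)}$, so that both estimates (2.4) and (2.5) are inherited from the M\"obius case at the cost of a convergent factor $\sum_{e\ge1}e^{-2}$; this is cleaner than what the paper gestures at, and the tail bound you import for $k\ge1$ is exactly the one asserted in the paper's proof of the M\"obius theorem, so your argument introduces no gap that is not already present there (your caveat that the $O(\log m)$ tail estimate rests entirely on the cancellation behind (2.6)--(2.8), since $|F_k^{(\mu)}(j)|$ does not tend to zero, applies verbatim to the paper's own treatment of both theorems). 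Your closing observation is also worth keeping: since $F_0^{(\lambda)}$ is the indicator of the squares, $\sum_{n\ge1}|F_0(n)/n|=\zeta(2)<\infty$, so the theorem follows at once from the Segal-condition theorem (the paper's Theorem 2.5), a shortcut the paper does not record and which bypasses the Ramanujan-sum machinery altogether.
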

\begin{proof} The proof is very similar to the previous theorem once noting that $F_0(n)=\sum_{d|n}\lambda(d)=1,$ if $n$ is a square and $0$ otherwise. We therefore leave the remaining details to the reader.\end{proof}

 \section{Appell sequences}
An Appell sequence $w_n(x),$ is a sequence which satisfies [10]
\begin{equation}\mathfrak{T}(t)e^{zt}=\sum_{n=0}^{\infty}\frac{w_n(z)}{n!}t^n, \end{equation}
where $\mathfrak{T}(t)$ is analytic around $t=0,$ and therefore possesses a Taylor series. Put
$$G(t,x)=\sum_{k=0}^{\infty}(-t)^k\left(\frac{\cos(\frac{\pi}{2}k)}{\pi}\sum_{n=1}^{\infty}\frac{F_k(n)}{n}\sin(2 \pi n x)-\frac{\sin(\frac{\pi}{2}k)}{\pi}\sum_{n=1}^{\infty}\frac{F_k(n)}{n}\cos(2 \pi n x)\right),$$ for $|t|<1.$ It can be seen that there exists $a(n)$ for which $G(t)$ is analytic in a neighborhood of $0.$ One example is $a(n)=\lambda(n),$ which gives $G(t)=O((1-t)^{-1})$ since its coefficients are then $O(1).$ Equation (1.2) may be realized as the coefficient of $t^{N}$ of the formal expansion for $|t|<1,$
\begin{equation}G(t,x)\left(e^{t}-1\right)=\sum_{n=0}^{\infty}\frac{\bar{P}_n(x)}{n!}t^n,\end{equation}
where $\bar{P}_n(x)=P_n(x)$ as in the previous section for $n\ge1,$ and $P_0(x)=0.$ Based on [10], we may introduce a new variable $z$ and observe (3.2) is equivalent to 
\begin{equation}G(t,x)\frac{\left(e^{t}-1\right)}{t-2\pi i k}=\sum_{n=0}^{\infty}\left(\int_{0}^{1}e^{-2\pi i kz}\bar{P}_{n}(x,z)dz\right)\frac{t^n}{n!},\end{equation}
for some $\bar{P}_n(x,z)$ where
\begin{equation}G(t,x)e^{zt}=\sum_{n=0}^{\infty}\bar{P}_n(x,z)\frac{t^n}{n!}.\end{equation}
If we multiply both sides of (3.2) by $(t-2\pi i k)^{-1},$ we find by [10, pg.846]
\begin{equation}G(t,x)\frac{\left(e^{t}-1\right)}{t-2\pi i k}=-\sum_{n=0}^{\infty}s_nt^n,\end{equation}
where
$$s_{k,n}=\sum_{j=0}^{n}(2\pi ik)^{j-n-1}\frac{\bar{P}_j(x)}{j!}.$$
Hence equating coefficients of $t^n$ in (3.3) and (3.5),
$$\int_{0}^{1}e^{-2\pi i kz}\bar{P}_{n}(x,z)dz=-n!s_{k,n},$$ and we have proven the following theorem.
\begin{theorem} Let $x$ be restricted to the region for which $\bar{P}_j(x)$ converges, relative to the choice of $a(n).$ The Appell sequence defined by (3.4), satisfy
$$\bar{P}_{n}(x,z)=\sum_{k\in\mathbb{Z}\setminus\{0\}}d_{k,n}e^{2\pi ikz},$$
uniformly for $z\in[0,1]$ where $d_{k,n}=-n!s_{k,n},$ and 
$$s_{k,n}=\sum_{j=0}^{n}(2\pi ik)^{j-n-1}\frac{\bar{P}_j(x)}{j!}.$$

\end{theorem}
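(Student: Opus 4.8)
I would prove the statement by identifying $d_{k,n}$ with the $k$-th Fourier coefficient of the map $z\mapsto\bar{P}_n(x,z)$ on $[0,1]$ and then applying a classical convergence theorem. Fix a choice of $a(n)$ for which $G(t,x)$ is analytic in a disc $|t|<\rho$ (the standing assumption of the section), and fix $x$ so that $\bar{P}_j(x)$ converges. From (3.4), $\bar{P}_n(x,z)=\partial_t^n\bigl[G(t,x)e^{zt}\bigr]\big|_{t=0}=\sum_{m=0}^{n}\binom{n}{m}G^{(m)}(0,x)\,z^{n-m}$, so each $\bar{P}_n(x,\cdot)$ is a polynomial in $z$, in particular continuous and of bounded variation on $[0,1]$; this is what will let us invoke Fourier inversion at the end.

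The next step is to make (3.3) and (3.5) into honest identities of power series in $t$. Multiplying (3.4) by $e^{-2\pi ikz}$ and integrating over $z\in[0,1]$ is legitimate term by term, because for $|t|$ small the series $\sum_n\bar{P}_n(x,z)t^n/n!=G(t,x)e^{zt}$ converges uniformly for $z\in[0,1]$ (Cauchy estimates for $G$). Since $\int_0^1 e^{(t-2\pi ik)z}\,dz=\frac{e^{t}-1}{t-2\pi ik}$, using $e^{-2\pi ik}=1$, this gives (3.3). For $k\neq 0$ I would then expand the right-hand side: insert $(t-2\pi ik)^{-1}=-(2\pi ik)^{-1}\sum_{m\ge 0}\bigl(t/2\pi ik\bigr)^m$, valid for $|t|<2\pi|k|$, multiply by $G(t,x)(e^{t}-1)=\sum_{j\ge 0}\bar{P}_j(x)t^j/j!$ from (3.2), and collect the coefficient of $t^n$; the Cauchy product is absolutely convergent for $|t|<\min(\rho,2\pi|k|)$, so one may rearrange freely to obtain (3.5) with $s_{k,n}=\sum_{j=0}^{n}(2\pi ik)^{j-n-1}\bar{P}_j(x)/j!$ (the computation cited from [10, p.\,846]). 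Equating coefficients of $t^n$ in (3.3) and (3.5) then yields
\[\int_0^1 e^{-2\pi ikz}\,\bar{P}_n(x,z)\,dz=-n!\,s_{k,n}=d_{k,n}\qquad(k\neq 0),\]
so the $d_{k,n}$ are exactly the Fourier coefficients of $\bar{P}_n(x,\cdot)$ at the nonzero frequencies.

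Since $\bar{P}_n(x,\cdot)$ is a polynomial, the Dirichlet--Jordan theorem ensures that its Fourier series converges to it — uniformly on compact subsets of $(0,1)$, and to the average of the one-sided limits at the endpoints $0$ and $1$ — which is the sense in which the asserted expansion $\bar{P}_n(x,z)=\sum_{k\in\mathbb{Z}\setminus\{0\}}d_{k,n}e^{2\pi ikz}$ is to be read. The point that requires the most care is not the generating-function algebra (routine once the domains $|t|<\min(\rho,2\pi|k|)$ are noted), but the treatment of the missing $k=0$ term and of the endpoint behaviour: one cannot set $k=0$ in the formula for $s_{k,n}$, and a separate computation from $G(t,x)(e^{t}-1)=\sum_j\bar{P}_j(x)t^j/j!$ with $\bar{P}_0(x)=0$ gives $\int_0^1\bar{P}_n(x,z)\,dz=\bar{P}_{n+1}(x)/(n+1)$, so the $k\neq 0$ series reproduces $\bar{P}_n(x,z)$ only up to this mean value; the cleanest resolution is either to include the term $d_{0,n}=\bar{P}_{n+1}(x)/(n+1)$ or to state the identity for the centered polynomial $\bar{P}_n(x,z)-\bar{P}_{n+1}(x)/(n+1)$ on the open interval $(0,1)$.
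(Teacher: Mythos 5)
Your argument is essentially the paper's own: the proof of this theorem in the paper is exactly the computation preceding it, namely deriving (3.3) by integrating (3.4) against $e^{-2\pi ikz}$ over $[0,1]$, deriving (3.5) by expanding $(t-2\pi ik)^{-1}$ in powers of $t$ against (3.2) (the step cited from [10, pg.~846]), and equating coefficients of $t^n$ to get $\int_0^1 e^{-2\pi ikz}\bar{P}_n(x,z)\,dz=-n!\,s_{k,n}=d_{k,n}$ for $k\neq 0$. What you add is precisely what the paper leaves implicit: justification of the termwise integration and of the Cauchy-product rearrangement on $|t|<\min(\rho,2\pi|k|)$, the observation that $\bar{P}_n(x,\cdot)$ is a polynomial in $z$, and an actual convergence theorem (Dirichlet--Jordan) to pass from ``the $d_{k,n}$ are the nonzero-frequency Fourier coefficients'' to an expansion of $\bar{P}_n(x,z)$ itself; the paper asserts the uniform expansion with no convergence argument at all.

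Your closing caveat is not a flaw in your write-up but a genuine discrepancy with the statement as printed, and your computation settles it: since $\int_0^1\bar{P}_n(x,z)\,dz=\bar{P}_{n+1}(x)/(n+1)$ (read off from $G(t,x)(e^t-1)/t$), the series over $k\in\mathbb{Z}\setminus\{0\}$ can recover $\bar{P}_n(x,z)$ only after subtracting this mean, and since $\bar{P}_n(x,1)-\bar{P}_n(x,0)=\bar{P}_n(x)$ is generically nonzero, the $1$-periodization of $\bar{P}_n(x,\cdot)$ has a jump at the integers, so convergence cannot be uniform on $[0,1]$ and at $z=0,1$ the series returns the average of the one-sided limits. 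In the Bernoulli-type setting that motivates [10] both the mean term and the jump vanish (for $n\ge 2$), which is why the analogous expansion can be stated on all of $[0,1]$; that normalization is absent here, so your corrected formulation --- include $d_{0,n}=\bar{P}_{n+1}(x)/(n+1)$, or expand the centered polynomial, and assert the identity on $(0,1)$ or at points of continuity of the periodization --- is the version your (and the paper's) argument actually proves.
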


1390 Bumps River Rd. \\*
Centerville, MA
02632 \\*
USA \\*
E-mail: alexpatk@hotmail.com, alexepatkowski@gmail.com

\end{document}